\pdfoutput=1
\documentclass[11pt]{amsart}

\usepackage{amsmath,amssymb,amsthm,mathtools}
\usepackage{bm}
\usepackage{graphicx}
\usepackage{microtype}
\usepackage[hidelinks]{hyperref}
\hypersetup{
  pdftitle={Universal Exponent-Two Degree Laws in Range-Renewal Networks},
  pdfauthor={Jiansheng Xie and Yechi Zhou},
  pdfkeywords={random graphs, stochastic processes, infinite occupancy schemes, regular variation, degree distributions}
}

\newcommand{\Pnum}{\mathbb{P}}
\newcommand{\Enum}{\mathbb{E}}
\newcommand{\RV}{\mathrm{RV}}
\newtheorem{theorem}{Theorem}
\newtheorem{lemma}{Lemma}
\newtheorem{proposition}{Proposition}
\theoremstyle{remark}
\newtheorem{remark}{Remark}

\title[Exponent-Two Laws in Range-Renewal Networks]{Universal Exponent-Two Degree Laws in Range-Renewal Networks}

\author{Jiansheng Xie}
\address{School of Mathematical Sciences, Fudan University, Shanghai 200433, China}
\email{jsxie@fudan.edu.cn}

\author{Yechi Zhou}
\address{School of Mathematical Sciences, Fudan University, Shanghai 200433, China}
\email{25210180114@m.fudan.edu.cn}

\keywords{random graphs, stochastic processes, infinite occupancy schemes, regular variation, degree distributions}

\begin{document}

\begin{abstract}
Let an infinite sequence of independent and identically distributed random variables over a countable alphabet generate a graph by joining consecutive symbols and suppressing repeated edges. We determine the exact tail and local asymptotics of the limiting degree distributions of this range-renewal graph. If the ordered sampling probabilities satisfy $\pi_k\in\RV_{-1/\gamma}$ with $0<\gamma<1$, then the directed and undirected degree tails are asymptotic to $\pi_k^\gamma$ and $2^\gamma\pi_k^\gamma$, respectively, while the corresponding local masses are asymptotic to $\pi_k^\gamma/k$ and $2^\gamma\pi_k^\gamma/k$. Consequently, both limiting laws are regularly varying with index $-2$, independent of $\gamma$; forgetting edge orientation affects only the leading amplitude. The proof combines infinite-occupancy estimates for discovery times and residual unseen mass with a conditional geometric representation of inter-discovery gaps. Uniform integrability yields the tail asymptotics, whereas a geometric-smoothing argument obtains the local masses without differentiating a regularly varying tail. We also prove that deleting self-loops leaves the limiting laws unchanged. Finite-sample simulations for normalized Zipf frequencies illustrate the asymptotic result.
\end{abstract}

\maketitle

\section{Introduction}\label{sec:intro}

Broad degree distributions are central to network theory and have been studied from several viewpoints \cite{AlbertBarabasi2002,Newman2003,NewmanStrogatzWatts2001}. Well-known mechanisms include preferential attachment and hidden variables \cite{BarabasiAlbert1999,Caldarelli2002,BogunaPastor2003}. Random sequences provide a different source of random graphs: observed symbols form the vertices, and local relations in the sequence generate edges. Examples include word-adjacency networks \cite{FerrerSole2001,MasucciRodgers2006,MasucciRodgers2009}, adjacency multigraphs generated from independently sampled alphabets \cite{BedogneRodgers2008}, and graph representations of time series \cite{Campanharo2011,Zou2019}. Such constructions raise the following question: given a heavy-tailed sampling law, how does the infinite-occupancy structure of the sequence determine the asymptotic degree law after repeated transitions have been merged?

We study the range-renewal construction of Chen, Xie, and Ying \cite{CXY2013,CXY2022}. Let $\xi_1,\xi_2,\ldots$ be independent samples from a probability distribution $\pi=(\pi_i)_{i\ge1}$ on a countable alphabet. Every observed symbol is a vertex, each consecutive pair $(\xi_i,\xi_{i+1})$ generates a directed edge, and repeated edges are suppressed. Forgetting orientation gives an undirected graph. Although the symbols are independent, the edge sequence is $1$-dependent because adjacent edges share an endpoint. Earlier work established almost-sure limits for the directed and undirected degree frequencies when $\pi_k\in\RV_{-1/\gamma}$, $0<\gamma<1$, and conjectured that the local limiting masses exhibit a power law with exponent $2$ for every $\gamma$ \cite{CXY2022}.

We prove this conjecture and obtain exact first-order asymptotics for four quantities: the directed and undirected limiting tails and their corresponding local masses. Both local laws are regularly varying with index $-2$. The parameter $\gamma$ remains in a slowly varying factor and in the orientation-dependent amplitude $2^\gamma$, but not in the degree exponent. Thus the conclusion is not merely an exponent identification: it also determines the leading constants and the ratio between the directed and undirected laws. We further show that deleting self-loops changes neither limiting distribution.

The local result is not a formal consequence of the tail result. A regularly varying tail need not have an asymptotically regular one-step decrement without additional information. The proof therefore separates the two tasks. Infinite-occupancy estimates for the discovery time $N_k$ and the residual unseen mass $M_k$ first yield the limiting tails through uniform integrability. Conditional on the history at the $k$th discovery, the next-discovery gap is geometric with parameter $M_k$. A geometric-smoothing lemma then resolves the one-step scale and yields the local masses directly. This argument avoids differentiating a tail asymptotic and may be useful in other random structures governed by discovery processes.

Range-renewal graphs are related to, but different from, edge-exchangeable and rank-one graph models \cite{CraneDempsey2018,Janson2018,CaronFox2017}. Janson's construction uses independent endpoint pairs and deletes self-loops, whereas consecutive pairs in range renewal overlap. For power-law activities, both models exhibit degree tails of order $k^{-1}$, but the present problem asks for sharp limiting tail and local-mass asymptotics in the overlapping-edge setting. Here the mechanism is the interaction between infinite occupancy and the suppression of repeated transitions.

The limits studied below are iterated: first the sequence length $n$ tends to infinity for fixed degree $k$, and then the limiting law is analysed as $k\to\infty$. Section~\ref{sec:model} defines the model and recalls the limiting degree representation. Section~\ref{sec:results} states the exact asymptotics, Sections~\ref{sec:representation} and \ref{sec:proof} develop the discovery representation and proof, and Section~\ref{sec:numerics} gives a finite-sample illustration for normalized Zipf frequencies. We conclude with methodological consequences and open problems.

\section{Sequence-generated network model}\label{sec:model}

Let $\pi=(\pi_i)_{i\ge1}$ be a nonincreasing probability sequence with $\pi_i>0$ and $\sum_i\pi_i=1$, and let $\xi_1,\xi_2,\ldots$ be independent and identically distributed with law $\pi$. After $n$ samples, write the observed vertex set and its size as
\begin{equation}
V_n:=\{\xi_1,\ldots,\xi_n\},\qquad R_n:=\#V_n.
\end{equation}
For ordered symbols $x,y$, define the number of observed consecutive transitions
\begin{equation}
W_n(x,y):=\sum_{i=1}^{n-1}\mathbf{1}_{\{\xi_i=x,\xi_{i+1}=y\}}.
\end{equation}
The directed out-neighbourhood and out-degree of $x$ are
\begin{equation}
V_n^+(x):=\{y:W_n(x,y)\ge1\},\quad D_n^+(x):=\#V_n^+(x),
\end{equation}
and the number of vertices with out-degree $k$ is
\begin{equation}
\vec R_{n,k}:=\#\{x\in V_{n-1}:D_n^+(x)=k\}.
\end{equation}
For the undirected projection, put
\begin{align}
\bar V_n(x)&:=\{y:W_n(x,y)+W_n(y,x)\ge1\},\\
\bar D_n(x)&:=\#\bar V_n(x),
\end{align}
and
\begin{equation}
\bar R_{n,k}:=\#\{x\in V_n:\bar D_n(x)=k\}.
\end{equation}
Repeated transitions are suppressed in both networks; a self-loop contributes one neighbour, as shown in Fig.~\ref{fig:model}. We use this retained-loop convention in the main derivation; the loop-deleted version is treated in Appendix~\ref{app:loop_deletion}, where we show that deleting self-loops leaves the limiting degree distributions unchanged.

\begin{figure}[t]
\centering
\includegraphics[width=\linewidth]{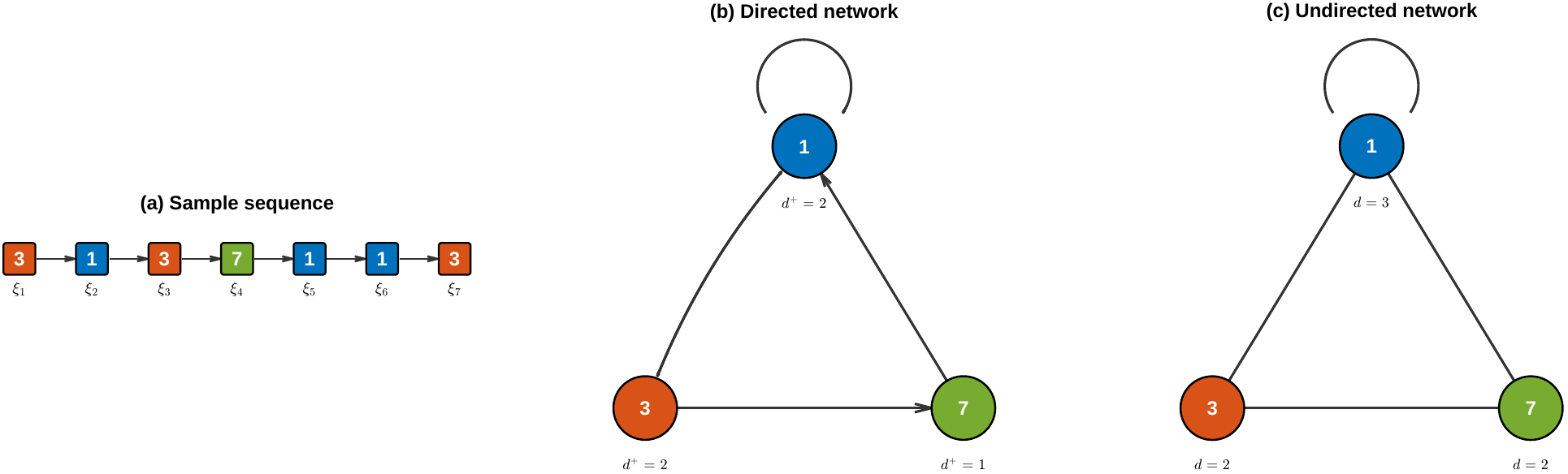}
\caption{Sequence-to-network construction for the sequence $(3,1,3,7,1,1,3)$. (a) Consecutive symbols generate transitions. (b) The directed graph retains one copy of each distinct transition, including the self-loop at vertex $1$; labels give distinct out-degrees. (c) Forgetting orientation gives the undirected graph; a self-loop contributes one neighbour.}
\label{fig:model}
\end{figure}

Throughout this paper, for a sequence $(a_k)$ we write $a_k\in\RV_\alpha$ if the step function $a_{\lceil x\rceil}$, $x\ge1$, is regularly varying at infinity with index $\alpha$ in the sense of Karamata \cite{BGT1987}. Equivalently, for every $t>0$,
\[
\frac{a_{\lceil tx\rceil}}{a_{\lceil x\rceil}} \longrightarrow t^\alpha
\qquad (x\to\infty).
\]

We assume that $\pi_k\in\RV_{-1/\gamma}$ for some $0<\gamma<1$. Equivalently, under the present monotonicity convention, define
\begin{equation}
\phi(x):=\frac{1}{\pi_{\lceil x\rceil}},\qquad x\ge1,
\end{equation}
so that
\begin{equation}
\phi\in\RV_{1/\gamma},\qquad \pi_n\phi(n)=1.
\label{eq:phi_def}
\end{equation}
Its generalized inverse counting function is
\begin{equation}
\alpha(t):=\#\{i:\pi_i\ge t^{-1}\}=\#\{i:\phi(i)\le t\}.
\end{equation}
By the monotone asymptotic inversion theorem \cite[Theorem~1.5.12]{BGT1987},
\begin{equation}
\alpha\in\RV_\gamma,\qquad
\alpha(\phi(n))\sim n,\qquad \phi(\alpha(t))\sim t.
\label{eq:inverse}
\end{equation}

Let $R_m$ also denote, when no confusion can arise, the number of occupied boxes after $m$ independent samples from $\pi$. Define the Sibuya weights
\begin{equation}
r_\ell(\gamma):=\frac{\gamma\,\Gamma(\ell-\gamma)}{\Gamma(1-\gamma)\Gamma(\ell+1)},\qquad \ell\ge1.
\label{eq:sibuya_mass}
\end{equation}
These weights form the Sibuya distribution \cite{Sibuya1979}; we will write
$r_\ell:=r_\ell(\gamma)$ for simplicity.
For finite $n$ and $k\ge1$, set
$\vec r_{n,k}:=\vec R_{n,k}/R_n$ and
$\bar r_{n,k}:=\bar R_{n,k}/R_n$.
Chen, Xie, and Ying proved that for every fixed $k\ge1$,
\begin{align}
\vec r_{n,k}&\longrightarrow
\vec r_k:=\sum_{\ell\ge1} r_\ell\, \Pnum(R_\ell=k),\label{eq:CXY_dir}\\
\bar r_{n,k}&\longrightarrow
\bar r_k:=\sum_{\ell\ge1} r_\ell\, \Pnum(R_{2\ell}=k)
\label{eq:CXY_undir}
\end{align}
almost surely \cite{CXY2022}. For any nonnegative summable sequence $(a_k)$, we use the tail convention
\begin{equation}
a_{k+}:=\sum_{\ell\ge k}a_\ell.
\label{eq:tail_notation}
\end{equation}
This applies in particular to the finite-sample tails
$\vec r_{n,k+},\bar r_{n,k+}$ and to the limiting tails
$\vec r_{k+},\bar r_{k+}$.

\section{Main results}\label{sec:results}

\subsection{Main asymptotic law}

\begin{theorem}[Degree asymptotics]\label{thm:main}
Assume that $\pi_k\in\RV_{-1/\gamma}$ for some $0<\gamma<1$. Then, as $k\to\infty$,
\begin{equation}
\vec r_{k+}\sim \pi_k^\gamma,\qquad
\bar r_{k+}\sim 2^\gamma\pi_k^\gamma,
\label{eq:tail_asym}
\end{equation}
and
\begin{equation}
\vec r_k\sim\frac{\pi_k^\gamma}{k},\qquad
\bar r_k\sim\frac{2^\gamma\pi_k^\gamma}{k}.
\label{eq:mass_asym}
\end{equation}
Consequently, both the directed and undirected limiting degree distributions are regularly varying with index $-2$; the only effect of forgetting edge orientation is a multiplicative factor $2^\gamma$ in the leading asymptotic term. Deleting self-loops does not alter these limiting distributions.
\end{theorem}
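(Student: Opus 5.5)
The plan is to turn the mixtures \eqref{eq:CXY_dir}--\eqref{eq:CXY_undir} into statements about the discovery time $N_k:=\inf\{m:R_m=k\}$. Since $R_m$ is nondecreasing with unit jumps, $\{R_\ell\ge k\}=\{N_k\le \ell\}$. Summing over $k$ therefore gives
\[
\vec r_{k+}=\sum_{\ell\ge1} r_\ell\,\Pnum(N_k\le\ell)=\Enum\bigl[\bar r(N_k)\bigr],\qquad \bar r(m):=\sum_{\ell\ge m}r_\ell ,
\]
and likewise $\bar r_{k+}=\Enum[\bar r(\lceil N_k/2\rceil)]$, because $\{R_{2\ell}\ge k\}=\{\ell\ge N_k/2\}$. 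The Sibuya tail has the explicit form $\bar r(m)=\Gamma(m-\gamma)/(\Gamma(1-\gamma)\Gamma(m))\sim m^{-\gamma}/\Gamma(1-\gamma)$. The tail claims thus reduce to showing
\[
\Enum[N_k^{-\gamma}]\sim\Gamma(1-\gamma)\,\pi_k^\gamma ,
\]
with the factor $2^\gamma$ produced by halving $N_k$.

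For the occupancy input I will use the classical Karlin estimates under \eqref{eq:inverse}: $\Enum R_m\sim\Gamma(1-\gamma)\alpha(m)$ and $R_m/\Enum R_m\to1$ almost surely. Inverting, $N_k\sim\phi(k/\Gamma(1-\gamma))\sim\Gamma(1-\gamma)^{-1/\gamma}\phi(k)$ almost surely, since $\phi\in\RV_{1/\gamma}$. Hence $N_k^{-\gamma}\pi_k^{-\gamma}\to\Gamma(1-\gamma)$ almost surely.

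Passing to expectations requires uniform integrability of $(N_k\pi_k)^{-\gamma}$, i.e.\ control of the lower tail of $N_k$. I will get this from $\Pnum(N_k\le m)=\Pnum(R_m\ge k)$ together with an exponential (Bernstein/Poissonization) upper deviation bound for $R_m$ at $m=\epsilon\phi(k)$. Because $\Enum R_m\approx\Gamma(1-\gamma)\epsilon^\gamma k\ll k$, this probability decays exponentially in $k$. Combined with $N_k\ge k$, this gives a bounded second moment of $(N_k\pi_k)^{-\gamma}$. The discretisation $\bar r(m)$ versus $m^{-\gamma}$ is harmless since $N_k\to\infty$.

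For the local masses I write
\[
\vec r_k=\vec r_{k+}-\vec r_{(k+1)+}=\Enum\bigl[\bar r(N_k)-\bar r(N_{k+1})\bigr]=\Enum\Bigl[\sum_{N_k\le \ell<N_{k+1}} r_\ell\Bigr].
\]
Conditionally on $\mathcal F_{N_k}$, the gap $G_k=N_{k+1}-N_k$ is geometric with parameter $M_k$, the unseen mass at time $N_k$. This is the \emph{main obstacle}: the gap is random on the same scale as $N_k$ only through $M_k$, and I want to avoid differentiating the tail.

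Since $r_\ell\sim\gamma\ell^{-1-\gamma}/\Gamma(1-\gamma)$ is smooth on scale $N_k$ and $M_k N_k$ should be of order $1$... in fact $M_k\approx\Gamma(1-\gamma)^{-1}\gamma\,\alpha(N_k)/N_k\cdot\Gamma(1-\gamma)$; Karlin gives $\Enum M_m\sim\gamma\Gamma(1-\gamma)\alpha(m)/m$. So $M_k\sim\gamma k/N_k$ almost surely, and $G_k$ is typically of size $N_k/(\gamma k)\ll N_k$. The geometric-smoothing lemma I would prove is this: for $f$ regularly varying and a geometric $G$ with mean $\ll x$, $\Enum\sum_{x\le\ell<x+G}f(\ell)=f(x)\,\Enum G\,(1+o(1))$. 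Applied conditionally it gives
\[
\vec r_k\approx\Enum\bigl[r_{N_k}/M_k\bigr]\approx\Enum\Bigl[\frac{\gamma N_k^{-1-\gamma}}{\Gamma(1-\gamma)}\cdot\frac{N_k}{\gamma k}\Bigr]=\frac{\Enum N_k^{-\gamma}}{k\,\Gamma(1-\gamma)}\sim\frac{\pi_k^\gamma}{k}.
\]
This again uses uniform integrability, now of $1/(M_kN_k^{\gamma}\pi_k^\gamma k)$. Here I would need a lower bound on $M_k$ that holds with overwhelming probability, which I would obtain by concentration of $\sum_i\pi_i\mathbf 1\{i\text{ unseen at }m\}$. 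The undirected case is identical with $N_k/2$ and its parity, giving $2^\gamma$.

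Regular variation with index $-2$ then follows since $\pi_k^\gamma\in\RV_{-1}$.

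For loop deletion, the degree changes by at most one, and only at vertices carrying a self-loop. The limiting fraction of such vertices among those of degree $k$ vanishes: a vertex of degree $k$ has visit count of order $k$ with small $\pi_x$, so the chance of an immediate repeat is small. Moreover $\Pnum(\text{loop})\cdot$ the mass shift is $o(\vec r_k)$. I would make this precise via the almost-sure limits, showing that the fraction of vertices with a loop tends to zero.
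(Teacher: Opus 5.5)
Your outline follows the paper's proof step by step: the identities $\vec r_{k+}=\Enum[Q_{N_k}]$ and $\bar r_{k+}=\Enum[Q_{\lceil N_k/2\rceil}]$, uniform integrability of $(\pi_kN_k)^{-\gamma}$ from a Poissonized exponential lower-tail bound for $N_k$, the conditionally geometric gap giving $\vec r_k=\Enum\sum_{j\ge0}r_{N_k+j}(1-M_k)^j$, and geometric smoothing. Two steps, however, are placeholders rather than arguments.

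\emph{Uniform integrability for the local masses.} You have the normalized quantity wrong. It is $Y_{k,1}=kH_{k,1}/\pi_k^\gamma\le k\,r_{N_k}/(M_k\pi_k^\gamma)\asymp k/(M_kN_k^{1+\gamma}\pi_k^\gamma)$. Your $1/(M_kN_k^\gamma\pi_k^\gamma k)$ is typically of order $1/(k^2\pi_k)\in\RV_{1/\gamma-2}$. More importantly, the lower bound on $M_k$ you need holds \emph{deterministically}. At time $N_k$ exactly $k$ symbols have been seen and $\pi$ is nonincreasing, so $M_k\ge\sum_{i>k}\pi_i\sim\frac{\gamma}{1-\gamma}k\pi_k$ by Karamata. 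On $\{N_k\ge\phi(a_0k)\}$ this gives $Y_{k,c}\le C(\phi(k)/N_k)^{1+\gamma}\le C'$. On the complement, $H_{k,c}\le Q_{N_k}\le1$ gives $Y_{k,c}\le C_\delta k^{2+\delta}$, which is killed by $e^{-c_0k}$.

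Your concentration route has to control the unseen mass at the \emph{random} time $N_k$. That requires an upper-tail bound for $N_k$ and monotonicity of the unseen mass in time. It also requires a variance-sensitive lower-tail inequality. Generic sub-Gaussian missing-mass bounds with variance proxy of order $1/m$ give only $\exp(-c\,\alpha(m)^2/m)$, which is useless for $\gamma<1/2$. Relatedly, $M_k\sim\gamma k/N_k$ a.s.\ does not follow from Karlin's asymptotics for $\Enum M_m$ alone; you need a strong law such as \eqref{eq:NM_limit}.

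\emph{Loop deletion.} The claim is equality of the limits for each fixed $k$. Deleting a loop changes one degree by one, so each count moves by at most $\Lambda_n$, the number of loop-carrying vertices. What must be shown is therefore $\Lambda_n/R_n\to0$ a.s. Your visit-count heuristic does not bound $\Lambda_n$, and the comparison with $o(\vec r_k)$ is a $k\to\infty$ statement, not this one.

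The paper splits the adjacent pairs into the two families $(\xi_1,\xi_2),(\xi_3,\xi_4),\dots$ and $(\xi_2,\xi_3),(\xi_4,\xi_5),\dots$, each i.i.d. It labels a pair $i$ if both entries equal $i$ and $0$ otherwise. This is an occupancy scheme with frequencies $\pi_i^2$ and counting function $\alpha(\sqrt t)+O(1)\in\RV_{\gamma/2}$, so $\Lambda_n=O(\alpha(\sqrt n))$ a.s., while $R_n\sim\Gamma(1-\gamma)\alpha(n)$.

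A first-moment version also works. The bound $\Enum\Lambda_n\le\sum_i\min(1,n\pi_i^2)\asymp\alpha(\sqrt n)$, combined with Borel--Cantelli along $n=2^j$ and monotonicity of $\Lambda_n$, gives the same conclusion.
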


\begin{remark}
The universal index $-2$ admits a simple interpretation. The occupancy estimate $\pi_kN_k\to\Gamma(1-\gamma)^{-1/\gamma}$ and the Sibuya tail $Q_n\sim n^{-\gamma}/\Gamma(1-\gamma)$ give $\vec r_{k+}=\mathbb E[Q_{N_k}]\sim\pi_k^\gamma\in\RV_{-1}$. Tail regular variation alone does not determine a one-step decrement. The additional estimate $N_kM_k\sim\gamma k$ supplies the missing local information: the next-discovery gap is conditionally geometric with parameter $M_k$, and geometric smoothing gives $\vec r_k\sim r_{N_k}(\gamma)/M_k\sim Q_{N_k}/k$. For the undirected law, evaluating at approximately $N_k/2$ and doubling the effective success probability produces the factor $2^\gamma$. Hence the local exponent is $-2$ throughout the stated i.i.d. regular-variation class.
\end{remark}

To make the universal exponent explicit, write $L(k):=k^{1/\gamma}\pi_k$, which is slowly varying. Then
\begin{align}
\vec r_{k+}&\sim k^{-1}L(k)^\gamma,
&\bar r_{k+}&\sim2^\gamma k^{-1}L(k)^\gamma,\\
\vec r_k&\sim k^{-2}L(k)^\gamma,
&\bar r_k&\sim2^\gamma k^{-2}L(k)^\gamma.
\end{align}

\subsection{Exact Zipf benchmark}

For normalized Zipf frequencies
\begin{equation}
\pi_n=\frac{n^{-1/\gamma}}{\zeta(1/\gamma)},\qquad0<\gamma<1,
\label{eq:zipf}
\end{equation}
we have a pure power law with no additional slowly varying correction. Defining
\begin{equation}
C_\gamma:=\zeta(1/\gamma)^{-\gamma},
\label{eq:Cgamma}
\end{equation}
Theorem~\ref{thm:main} gives
\begin{align}
\vec r_{k+}&\sim C_\gamma k^{-1},
&\bar r_{k+}&\sim2^\gamma C_\gamma k^{-1},\\
\vec r_k&\sim C_\gamma k^{-2},
&\bar r_k&\sim2^\gamma C_\gamma k^{-2}.
\end{align}

\section{Discovery representation}\label{sec:representation}

Recall that the Sibuya tail is
\begin{equation}
Q_n:=r_{n+}=\sum_{\ell\ge n}r_\ell
=\frac{\Gamma(n-\gamma)}{\Gamma(1-\gamma)\Gamma(n)}
\sim\frac{n^{-\gamma}}{\Gamma(1-\gamma)}.
\label{eq:Q_summary}
\end{equation}
Its one-step decrement satisfies the exact recurrence
\begin{equation}
Q_n-Q_{n+1}=r_n(\gamma)=\frac{\gamma}{n}Q_n.
\label{eq:Q_rec}
\end{equation}
For $k\ge1$, define the discovery index, inter-discovery gap, and residual unseen mass by
\begin{align}
N_k&:=\inf\{n:R_n=k\},&G_k&:=N_{k+1}-N_k,\label{eq:N_G_def}\\
M_k&:=\sum_{i:\,i\notin\{\xi_1,\ldots,\xi_{N_k}\}}\pi_i.
\label{eq:M_def}
\end{align}
Thus $N_k$ is the number of samples required to observe $k$ distinct symbols. Let $\mathcal F_{N_k}$ denote the history up to time $N_k$. Then the next-discovery gap is conditionally geometric:
\begin{equation}
\Pnum(G_k>j\mid\mathcal F_{N_k})=(1-M_k)^j,
\qquad j\ge0.
\label{eq:geom_conditional}
\end{equation}

Since $R_\ell\ge k$ if and only if $\ell\ge N_k$, the directed degree tail satisfies
\begin{equation}
\vec r_{k+}
=\sum_{\ell\ge1} r_\ell\, \Pnum(R_\ell\ge k)
=\Enum\left[Q_{N_k}\right].
\label{eq:dir_tail_expect}
\end{equation}
Similarly, since $R_{2\ell}\ge k$ if and only if $2\ell\ge N_k$, the undirected degree tail satisfies
\begin{equation}
\bar r_{k+}
=\sum_{\ell\ge1} r_\ell\, \Pnum(R_{2\ell}\ge k)
=\Enum\left[Q_{\lceil N_k/2\rceil}\right].
\label{eq:undir_tail_expect}
\end{equation}

For the directed local mass, Eq.~\eqref{eq:geom_conditional} and the identity in Ref.~\cite{CXY2022} give
\begin{equation}
\vec r_k=\Enum H_{k,1},\qquad
H_{k,1}:=\sum_{j\ge0}r_{N_k+j}(\gamma)(1-M_k)^j.
\label{eq:H1}
\end{equation}
For the undirected local mass, put
\begin{equation}
a_k:=\left\lceil\frac{N_k}{2}\right\rceil,
\qquad \epsilon_k:=2a_k-N_k\in\{0,1\}.
\label{eq:a_epsilon}
\end{equation}
Since $R_{2\ell}=k$ if and only if $N_k\le2\ell<N_{k+1}$, conditioning as above yields
\begin{align}
\bar r_k&=\Enum H_{k,2},\notag\\
H_{k,2}&:=(1-M_k)^{\epsilon_k}
\sum_{j\ge0}r_{a_k+j}(\gamma)(1-M_k)^{2j}.
\label{eq:H2}
\end{align}

\section{Proof of Theorem~\ref{thm:main}}\label{sec:proof}

\subsection{Occupancy estimates and degree tails}

We use two fundamental consequences of infinite-occupancy theory.

\begin{proposition}[\cite{GHP2007}, Propositions~23--25]
\label{prop:piN}
Under the assumption $\pi_k\in\RV_{-1/\gamma}$,
\begin{align}
\pi_k N_k \longrightarrow \Gamma(1-\gamma)^{-1/\gamma}
&\quad\text{a.s.},\label{eq:piN_limit}\\
\frac{N_kM_k}{k}\longrightarrow\gamma
&\quad\text{a.s.}
\label{eq:NM_limit}
\end{align}
\end{proposition}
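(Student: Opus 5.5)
The plan is to derive both statements from the almost-sure asymptotics of the occupancy count $R_n$ and of the missing mass after $n$ samples, and then transfer these to the random times $N_k$ by inversion. Write $M^{(n)}:=\sum_{i\notin\{\xi_1,\dots,\xi_n\}}\pi_i$ for the unseen mass at deterministic time $n$, so that $M_k=M^{(N_k)}$.

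\textbf{Step 1 (deterministic times).} Karlin's theory, as developed in \cite{GHP2007}, gives, under $\alpha\in\RV_\gamma$ from \eqref{eq:inverse},
\[
R_n\sim\Gamma(1-\gamma)\,\alpha(n)\quad\text{a.s.}
\]
The means are obtained by a Poissonization computation, $\Enum R_n\sim\Gamma(1-\gamma)\alpha(n)$. The almost-sure statement follows from variance bounds $\operatorname{Var}R_n=O(\Enum R_n)$, Borel--Cantelli along a geometric subsequence $n_j=\lfloor\theta^j\rfloor$, and monotonicity of $R_n$ to fill the gaps between subsequence points. In the same way, the expected missing mass is
\[
\Enum M^{(n)}=\sum_i\pi_i(1-\pi_i)^n\sim\Gamma(1-\gamma)\,\gamma\,\alpha(n)/n .
\]
Equivalently, $n\,\Enum M^{(n)}\sim\Enum R_{n,1}$, the expected number of singletons, which is asymptotic to $\gamma\,\Gamma(1-\gamma)\alpha(n)$. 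Concentration of $M^{(n)}$ follows from variance or exponential bounds, since $M^{(n)}$ is a sum of independent-type indicator contributions with small weights; again one uses a geometric subsequence. Between subsequence points, monotonicity of $M^{(n)}$ in $n$ together with the regular variation of $\alpha(n)/n$ interpolates, giving
\[
nM^{(n)}/\alpha(n)\to\gamma\,\Gamma(1-\gamma)\quad\text{a.s.}
\]

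\textbf{Step 2 (inversion to $N_k$).} Since $R_{N_k}=k$, Step 1 gives $\Gamma(1-\gamma)\alpha(N_k)\sim k$ a.s., that is, $\alpha(N_k)\sim k/\Gamma(1-\gamma)$. The function $\phi$ is regularly varying and $\phi(\alpha(t))\sim t$ by \eqref{eq:inverse}. Applying $\phi$ therefore yields
\[
N_k\sim\phi\bigl(k\,\Gamma(1-\gamma)^{-1}\bigr)\sim\Gamma(1-\gamma)^{-1/\gamma}\phi(k).
\]
Because $\pi_k\phi(k)=1$ by \eqref{eq:phi_def}, this is \eqref{eq:piN_limit}. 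Here one needs the uniform convergence theorem for $\phi$, so that the asymptotic equivalence passes through $\phi$. For \eqref{eq:NM_limit}, evaluate Step 1 at $n=N_k$:
\[
N_kM_k\sim\gamma\,\Gamma(1-\gamma)\,\alpha(N_k)\sim\gamma k .
\]

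\textbf{Main obstacle.} The delicate point is the almost-sure statement for the missing mass. Its fluctuations are governed by the largest unseen $\pi_i$, which is of order $1/n$. One must show that
\[
\operatorname{Var}M^{(n)}=O\bigl(\alpha(n)/n^2\bigr),
\]
so that the relative variance is $O(1/\alpha(n))$. This decays polynomially, which makes Chebyshev's inequality summable along a geometric subsequence. I would prove the variance bound using the negative correlation of the indicators $\mathbf 1\{i\text{ unseen}\}$, which gives $\operatorname{Var}M^{(n)}\le\sum_i\pi_i^2(1-\pi_i)^n$, followed by an Abelian estimate of this sum via $\alpha\in\RV_\gamma$.
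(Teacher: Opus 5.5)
Your argument is correct, but it takes a more self-contained route than the paper.

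The paper proves nothing new at this point. It matches notation with Gnedin, Hansen and Pitman and quotes their Proposition~24 for $N_k\sim\phi(k/\Gamma(1-\gamma))$ a.s. It quotes their Proposition~25, evaluated at the $k$th discovery, for $N_kM_k/k\to\gamma$. Its only step of its own is the regular-variation rewriting $\phi(k/\Gamma(1-\gamma))\sim\Gamma(1-\gamma)^{-1/\gamma}\phi(k)=\Gamma(1-\gamma)^{-1/\gamma}\pi_k^{-1}$, which you also carry out.

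You instead rebuild the cited results:
\begin{itemize}
\item strong laws at deterministic times for $R_n$ and for the missing mass $M^{(n)}$, obtained from Karlin's mean asymptotics, variance bounds from negative correlation of the empty-box indicators, Chebyshev and Borel--Cantelli along $\lfloor\theta^j\rfloor$, and monotone interpolation;
\item inversion at the random times $N_k$, using $R_{N_k}=k$, $\phi(\alpha(t))\sim t$ and the uniform convergence theorem.
\end{itemize}

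These steps check out. The bound $\mathrm{Var}\,M^{(n)}\le\sum_i\pi_i^2(1-\pi_i)^n=O(\alpha(n)/n^2)$ gives relative variance $O(1/\alpha(n))$. This is summable along a geometric subsequence because $\alpha\in\RV_\gamma$ with $\gamma>0$. Since the strong laws hold along all $n$ simultaneously, they may be evaluated at the random times $N_k\to\infty$.

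The paper's version buys brevity, but it rests entirely on the external reference and on the dictionary being exact. In particular, it needs the missing-mass statement to apply at the random discovery times. Yours is longer but makes two things transparent. First, the limit $\gamma$ appears as the ratio of Karlin's constants, $\gamma\Gamma(1-\gamma)$ for singletons and $\Gamma(1-\gamma)$ for occupied boxes. Second, the passage from deterministic times to discovery times is shown to be legitimate.
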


\begin{proof}
In the notation of Gnedin, Hansen, and Pitman \cite{GHP2007}, the probabilities $p_j$ there are our $\pi_j$, the number of occupied boxes $K_n$ there is our $R_n$, and the discovery time $N_k$ is the same. Their regular-variation parameter $\alpha\in(0,1)$ is our $\gamma$. Under this correspondence, Proposition~24 gives
\[
N_k \sim \phi\!\left(\frac{k}{\Gamma(1-\gamma)}\right)
\quad\text{a.s.}
\]
Since $\phi\in\RV_{1/\gamma}$ and $\pi_k=1/\phi(k)$, we have
$\phi(k/\Gamma(1-\gamma))\sim \phi(k)\Gamma(1-\gamma)^{-1/\gamma}
= \pi_k^{-1}\Gamma(1-\gamma)^{-1/\gamma}$.
Therefore \eqref{eq:piN_limit} follows. Their residual mass after $k$ discoveries is our $M_k$; Proposition~25, evaluated at the $k$th discovery time, gives \eqref{eq:NM_limit}.
\end{proof}

In addition, the following exponential lower-tail estimate for $N_k$ will be needed; its proof is included in Appendix~\ref{app:tail_bound}. There exist constants $a_0,c_0>0$ such that for all sufficiently large $k$,
\begin{equation}
\Pnum(N_k<\phi(a_0 k))\le e^{-c_0 k}.
\label{eq:tail_bound}
\end{equation}

By Proposition~\ref{prop:piN}, with $Y_k:=(\pi_k N_k)^{-\gamma}$, we have
\begin{equation}
Y_k \longrightarrow \Gamma(1-\gamma)
\quad\text{a.s.}
\label{eq:Y_conv}
\end{equation}

We claim that $\{Y_k\}$ is uniformly integrable. For any $\delta>0$, using the layer-cake representation and noting that $N_k\ge1$,
\begin{align}
\Enum[Y_k^{1+\delta}]
&= \frac{\gamma(1+\delta)}{\pi_k^{\gamma(1+\delta)}}
\int_1^\infty s^{-\gamma(1+\delta)-1} \Pnum(N_k < s) \, ds.
\end{align}
Let $u_k:=\phi(a_0 k)$. Splitting the integral at $u_k$ gives
\begin{align}
\Enum[Y_k^{1+\delta}]
&\le \frac{e^{-c_0 k} + u_k^{-\gamma(1+\delta)}}{\pi_k^{\gamma(1+\delta)}}.
\end{align}
Since $u_k=\phi(a_0 k)\sim a_0^{1/\gamma}/\pi_k$, we have
$u_k^{-\gamma(1+\delta)}\sim a_0^{-(1+\delta)} \pi_k^{\gamma(1+\delta)}$.
Thus there exists a constant $c_1>0$ such that
\begin{equation}
\Enum[Y_k^{1+\delta}]
\le \frac{e^{-c_0 k}}{\pi_k^{\gamma(1+\delta)}} + c_1 a_0^{-(1+\delta)}.
\end{equation}
Since $\pi_k\in\RV_{-1/\gamma}$, for any $\varepsilon>0$ there exists $C>0$ such that $\pi_k\ge C k^{-1/\gamma-\varepsilon}$. Hence
\[
\frac{e^{-c_0 k}}{\pi_k^{\gamma(1+\delta)}} \le C k^{(1+\delta)(1+\gamma\varepsilon)} e^{-c_0 k} \longrightarrow 0,
\]
so $\sup_k \Enum[Y_k^{1+\delta}]<\infty$. Uniform integrability follows.

By the Vitali convergence theorem, \eqref{eq:Y_conv} and uniform integrability imply
\begin{equation}
\Enum[Y_k] \longrightarrow \Gamma(1-\gamma),
\end{equation}
or equivalently,
\begin{equation}
\Enum[N_k^{-\gamma}] \sim \Gamma(1-\gamma) \pi_k^\gamma.
\label{eq:EN_limit}
\end{equation}

Because $N_k\ge k$ and the ratio in \eqref{eq:Q_summary} converges uniformly over all integer arguments at least $k$, Eqs.~\eqref{eq:dir_tail_expect} and \eqref{eq:EN_limit} give
\begin{equation}
\vec r_{k+}
= \Enum[Q_{N_k}]
\sim \frac{1}{\Gamma(1-\gamma)} \Enum[N_k^{-\gamma}]
\sim \pi_k^\gamma.
\label{eq:dir_tail_result}
\end{equation}
Applying the same uniform argument at half the discovery index yields
\begin{equation}
\bar r_{k+}
= \Enum[Q_{\lceil N_k/2\rceil}]
\sim \frac{2^\gamma}{\Gamma(1-\gamma)} \Enum[N_k^{-\gamma}]
\sim 2^\gamma \pi_k^\gamma.
\label{eq:undir_tail_result}
\end{equation}

\subsection{Geometric smoothing}

The following lemma extracts the local scale from the conditional geometric sums in Eqs.~\eqref{eq:H1} and \eqref{eq:H2}.

\begin{lemma}[Geometric smoothing]\label{lem:smooth}
Let $(b_n)$ be a positive sequence with $b_n\in\RV_\beta$. If $n_k$ are positive integers and $m_k\in(0,1]$ satisfy $n_km_k\to\infty$, then
\begin{equation}
\sum_{j\ge0}b_{n_k+j}(1-m_k)^j
\sim\frac{b_{n_k}}{m_k}.
\label{eq:smooth}
\end{equation}
\end{lemma}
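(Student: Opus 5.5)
The plan is to normalize by $b_{n_k}$ and reduce the sum to a Riemann-type integral on the natural scale $1/m_k$. Write $n=n_k$, $m=m_k$, and substitute $j=\lfloor x/m\rfloor$. Then
\[
m\sum_{j\ge0}\frac{b_{n+j}}{b_n}(1-m)^j=\int_0^\infty \frac{b_{n+\lfloor x/m\rfloor}}{b_n}(1-m)^{\lfloor x/m\rfloor}\,dx ,
\]
and it suffices to show that this integral tends to $\int_0^\infty e^{-x}\,dx=1$.

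The key observation is that $j\approx x/m$ is small compared with $n$, since $j/n\approx x/(nm)\to0$. We also have $n\ge nm\to\infty$. Fix $x$. Then $(n+\lfloor x/m\rfloor)/n\to1$, so the uniform convergence theorem for regularly varying functions (applied to the step function $b_{\lceil\cdot\rceil}$ on compact $t$-sets near $1$) gives $b_{n+\lfloor x/m\rfloor}/b_n\to1$.

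For the geometric factor, note first that $m\le1$. If $m\to0$ along a subsequence, then $(1-m)^{\lfloor x/m\rfloor}\to e^{-x}$. If $m$ stays bounded away from $0$, the sum is dominated by a bounded number of terms. In that case $j$ ranges over a set of bounded effective size, $b_{n+j}/b_n\to1$ uniformly for $j\le J$, and the tail is handled as below. So the integrand converges pointwise to $e^{-x}$ in the main case. Any leftover case, such as $m=1$ exactly, is trivial: there the sum equals $b_n=b_n/m$ with the convention $0^0=1$.

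The main obstacle is domination, because $b_{n+j}/b_n$ can grow like $(1+j/n)^{\beta+\varepsilon}$ when $\beta>0$. I would use Potter's bounds (BGT Theorem 1.5.6): for $n$ large and all $j\ge0$,
\[
\frac{b_{n+j}}{b_n}\le 2\Bigl(1+\frac{j}{n}\Bigr)^{|\beta|+1}\le 2(1+x)^{|\beta|+1},
\]
where the second inequality uses $j/n\le x/(nm)\le x$ once $nm\ge1$. Since $\log(1-m)\le -m$, we have $(1-m)^{\lfloor x/m\rfloor}\le e^{-m\lfloor x/m\rfloor}\le e^{1-x}$. The integrand is therefore bounded by $2e(1+x)^{|\beta|+1}e^{-x}$, which is integrable. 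Dominated convergence then gives the limit $1$, which is \eqref{eq:smooth}. One detail to check is that the Potter bound for the step-function version holds for all $n\ge n_0$. This follows from Potter applied to $b_{\lceil x\rceil}$ at real arguments, which are then specialised to integers.
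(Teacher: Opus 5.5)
Your argument is correct and is essentially the paper's proof in analytic form. The paper writes the normalized sum as $\mathbb{E}[b_{n_k+J_k}/b_{n_k}]$ with $J_k$ geometric of parameter $m_k$, gets convergence in probability from $J_k/n_k\to0$ and the uniform convergence theorem, and gets uniform integrability from Potter's bound with $\mathbb{E}J_k^d\le C_dm_k^{-d}$; this matches your pointwise convergence and Potter-plus-exponential domination after the rescaling $x=m_kj$.

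Your case split on whether $m_k\to0$ can be dropped, since $\int_0^\infty(1-m)^{\lfloor x/m\rfloor}\,dx=1$ exactly for every $m\in(0,1]$. Apply dominated convergence directly to $\bigl(b_{n+\lfloor x/m\rfloor}/b_n-1\bigr)(1-m)^{\lfloor x/m\rfloor}$, which tends to $0$ pointwise whatever $m_k$ does, so you need neither the limit $e^{-x}$ nor a subsequence argument.
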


\begin{proof}
Let $J_k$ be geometric on $\{0,1,\ldots\}$ with $\Pnum(J_k=j)=m_k(1-m_k)^j$. The ratio of the two sides of \eqref{eq:smooth} is $\Enum[b_{n_k+J_k}/b_{n_k}]$. Since
\begin{equation}
\Pnum(J_k>\delta n_k)\le
\exp\{-m_k\lfloor\delta n_k\rfloor\}\longrightarrow0,
\end{equation}
we have $J_k/n_k\to0$ in probability. The uniform convergence theorem for regularly varying sequences gives $b_{n_k+J_k}/b_{n_k}\to1$ in probability. It remains to justify convergence of expectations. For any $p>1$ and sufficiently small $\varepsilon>0$, Potter's bound gives
\begin{equation}
\frac{b_{n_k+j}}{b_{n_k}}
\le C_\varepsilon\left(1+\frac{j}{n_k}\right)^a,
\qquad a:=(\beta+\varepsilon)_+.
\end{equation}
Choose an integer $d>ap$. Since $\Enum J_k^d\le C_dm_k^{-d}$,
\begin{equation}
\sup_{k\ \mathrm{large}}
\Enum\left(\frac{b_{n_k+J_k}}{b_{n_k}}\right)^p
\le C\left[1+(n_km_k)^{-d}\right]<\infty.
\end{equation}
Thus the ratios are uniformly integrable, and their expectations converge to $1$.
\end{proof}

\subsection{Local degree masses}

The Sibuya masses satisfy $r_n(\gamma)\in\RV_{-1-\gamma}$. On the probability-one event where Proposition~\ref{prop:piN} holds, $N_kM_k\to\infty$, so Lemma~\ref{lem:smooth} applied to \eqref{eq:H1} gives
\begin{equation}
H_{k,1}\sim\frac{r_{N_k}(\gamma)}{M_k}.
\label{eq:H1_asym}
\end{equation}
For the undirected case, set $m_k^{(2)}:=1-(1-M_k)^2\sim2M_k$. Since $a_k\sim N_k/2$ and $a_km_k^{(2)}\sim N_kM_k\to\infty$, Eqs.~\eqref{eq:H2} and \eqref{eq:smooth} give
\begin{equation}
H_{k,2}\sim
(1-M_k)^{\epsilon_k}\frac{r_{a_k}(\gamma)}{m_k^{(2)}}
\sim2^\gamma\frac{r_{N_k}(\gamma)}{M_k}.
\label{eq:H2_asym}
\end{equation}
Using the recurrence \eqref{eq:Q_rec}, Proposition~\ref{prop:piN}, and \eqref{eq:NM_limit}, both cases can be written as
\begin{equation}
H_{k,c}
\sim c^\gamma\frac{r_{N_k}(\gamma)}{M_k}
=c^\gamma\frac{\gamma Q_{N_k}}{N_kM_k}
\sim\frac{c^\gamma\pi_k^\gamma}{k},
\qquad c=1,2,
\label{eq:H_pathwise}
\end{equation}
almost surely.

It remains to pass from \eqref{eq:H_pathwise} to expectations. Put
\begin{align}
s_{k,c}&:=\frac{c^\gamma\pi_k^\gamma}{k},&
Y_{k,c}&:=\frac{H_{k,c}}{s_{k,c}},\notag\\
B_k&:=\{N_k\ge\phi(a_0k)\}.&&
\end{align}
Exactly $k$ symbols have been observed by time $N_k$, so the ordering of $(\pi_i)$ and Karamata's theorem imply
\begin{equation}
M_k\ge\sum_{i>k}\pi_i
\sim\frac{\gamma}{1-\gamma}\,k\pi_k
=\frac{\gamma}{1-\gamma}\frac{k}{\phi(k)}.
\label{eq:M_lower}
\end{equation}
Monotonicity of the Sibuya masses and Eqs.~\eqref{eq:H1}--\eqref{eq:H2} yield, for $c=1,2$,
\begin{equation}
H_{k,c}\le\frac{r_{\lceil N_k/c\rceil}(\gamma)}{M_k}.
\end{equation}
Consequently, regular variation and \eqref{eq:M_lower} give the deterministic bound
\begin{equation}
Y_{k,c}\mathbf1_{B_k}
\le C\left(\frac{\phi(k)}{N_k}\right)^{1+\gamma}
\mathbf1_{B_k}\le C_B.
\end{equation}
On $B_k^c$, $H_{k,c}\le1$ and Potter's bound gives $Y_{k,c}\le C_\delta k^{2+\delta}$. Therefore the exponential estimate \eqref{eq:tail_bound} implies
\begin{equation}
\Enum[Y_{k,c};B_k^c]
\le C_\delta k^{2+\delta}e^{-c_0k}\longrightarrow0.
\end{equation}
Thus $(Y_{k,c})$ is uniformly integrable. Combining this with \eqref{eq:H_pathwise} yields
\begin{equation}
\vec r_k=\Enum H_{k,1}\sim\frac{\pi_k^\gamma}{k},
\qquad
\bar r_k=\Enum H_{k,2}\sim\frac{2^\gamma\pi_k^\gamma}{k}.
\label{eq:local_result}
\end{equation}

This completes the proof of Theorem~\ref{thm:main}. The loop-deleted assertions follow from Proposition~\ref{prop:loop_deletion} in Appendix~\ref{app:loop_deletion}.

\section{Finite-sample illustration}\label{sec:numerics}

We now illustrate the directed tail asymptotic in the normalized Zipf family \eqref{eq:zipf}. By the convention in \eqref{eq:tail_notation},
\begin{equation}
\vec r_{n,k+}
=\sum_{\ell\ge k}\vec r_{n,\ell}
=\frac{1}{R_n}\#\{v:D_n^+(v)\ge k\}.
\label{eq:finite_directed_tail}
\end{equation}
For each fixed $k$, the strong laws in \cite{CXY2022} imply $\vec r_{n,k+}\to\vec r_{k+}$ almost surely. Consequently, the Zipf case satisfies the iterated limit
\begin{equation}
\lim_{k\to\infty}\lim_{n\to\infty}
\frac{k\vec r_{n,k+}}{C_\gamma}=1
\qquad\text{a.s.}
\label{eq:iterated_tail}
\end{equation}
No simultaneous limit with $k=k(n)$ is claimed.

Figure~\ref{fig:tail} displays \eqref{eq:iterated_tail} as a linear relation by plotting $1/\vec r_{n,k+}$ against $k$. The dashed line is the theoretical prediction $k/C_\gamma$, with $C_\gamma$ fixed by \eqref{eq:Cgamma}; no parameter is fitted to the simulations.

\begin{figure}[t]
\centering
\includegraphics[width=\linewidth]{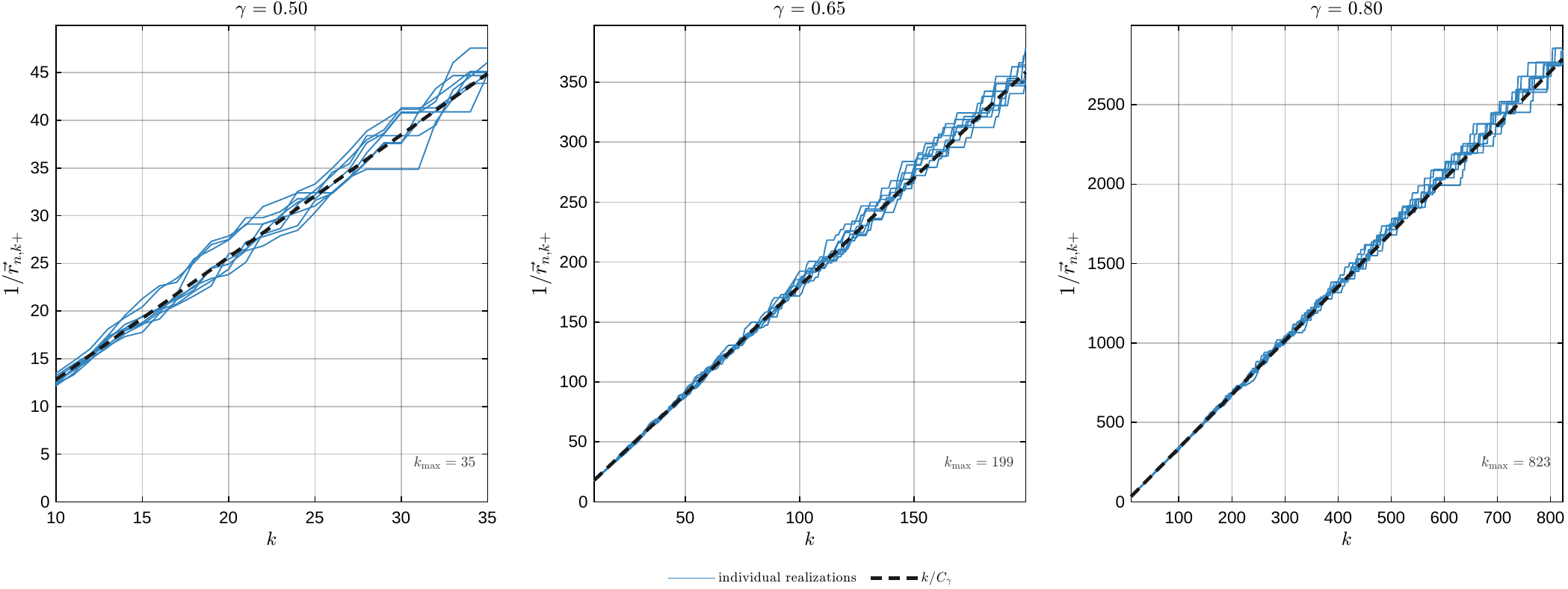}
\caption{Reciprocal finite-sample directed degree tails for normalized Zipf frequencies with $n=10^6$ and $\gamma=0.50,0.65,0.80$ (left to right). Each panel shows eight independent realisations separately (thin blue curves); no averaging is used. The dashed line is the theoretical benchmark $k/C_\gamma$. Both axes are linear. The displayed interval begins at $k=10$ and ends at the largest threshold for which every realisation has at least $30$ vertices in the directed degree tail.}
\label{fig:tail}
\end{figure}

For each value of $\gamma$ we generated eight prespecified independent sequences of length $n=10^6$. The mean numbers of observed vertices were $1.403\times10^3$, $1.123\times10^4$, and $8.556\times10^4$ for $\gamma=0.50,0.65,0.80$, respectively. The corresponding mean numbers of distinct directed edges were $8.191\times10^3$, $6.166\times10^4$, and $3.282\times10^5$, giving mean directed out-degrees $5.841$, $5.492$, and $3.836$.

The common lower endpoint $k_{\min}=10$ was fixed before inspecting the reciprocal-tail plot. To avoid magnifying extremely sparse counts, the upper endpoint is the largest $k$ for which every realisation contains at least $30$ vertices of out-degree at least $k$. This rule gives $k_{\max}=35$, $199$, and $823$. The spread grows near the sparsest retained tail, as expected for a reciprocal count, but the individual curves remain close to the predicted line over the displayed intervals.

The simulations used a rejection sampler for the infinite Zipf law. Consecutive samples define directed edges and repeated edges are removed. To avoid loss of integer precision, ranks above $K=10^{12}$ were assigned fresh labels. This replacement can alter the graph only if two such samples had the same original rank; a union bound is at most $1.58\times10^{-8}$ for the worst simulated parameter combination.

\section{Discussion and outlook}\label{sec:discussion}

The exponent-two law results from a combination of two structures. Regular variation controls the discovery scale through $\pi_kN_k$, while the residual unseen mass determines the local gap scale through $N_kM_k$. The first relation is sufficient for the degree tails; the second is what makes the local masses accessible. This distinction is mathematically essential because a tail asymptotic cannot in general be differentiated term by term. Conditional geometric smoothing provides the additional local regularity in a form adapted to the discovery process.

The model also separates the effects of sampling heterogeneity, edge orientation and repeated transitions. The activity exponent $\gamma$ controls the rate at which new vertices appear, but it cancels from the local degree exponent. Forgetting orientation multiplies the leading term by $2^\gamma$. Suppressing repeated transitions is essential: in an adjacency multigraph, multiplicities continue to contribute to degree \cite{BedogneRodgers2008}. By contrast, deleting self-loops is asymptotically immaterial. The number of loop-carrying vertices is $O(\alpha(\sqrt n))$ almost surely, compared with $R_n\asymp\alpha(n)$ observed vertices, so the affected proportion is $n^{-\gamma/2+o(1)}$.

The comparison with Janson's rank-one graph \cite{Janson2018} suggests that exponent-two tails are stable across more than one endpoint-sampling construction. The mathematical inputs differ, however: Janson's edges are independent, whereas consecutive range-renewal edges overlap. The present argument identifies the discovery-time estimates that recover sharp tails and local masses in the overlapping setting.

Two natural extensions remain open. First, a joint regime $k=k(n)$ would quantify the degree window in which the limiting formula approximates a finite graph and would explain the increasing fluctuations in the far tail of Fig.~\ref{fig:tail}. Second, for a dependent symbol sequence with marginal law $\pi$, temporal dependence changes both discovery gaps and transition repetitions. Determining conditions under which the exponent $2$ and the orientation ratio $2^\gamma$ persist would connect the present occupancy argument to sequence-generated graphs from correlated data.

\section*{Data availability statement}

All numerical results were generated in MATLAB by direct Monte Carlo simulation of \eqref{eq:zipf}. The seeded sampler and network-construction code, validation scripts, fixed seeds, and processed data underlying Figs.~\ref{fig:model} and \ref{fig:tail} are archived at Zenodo: \href{https://doi.org/10.5281/zenodo.22052914}{doi:10.5281/zenodo.22052914}.

\appendix

\section{Proof of the exponential lower-tail estimate}\label{app:tail_bound}

We provide a proof of the exponential lower-tail estimate used in the uniform integrability argument. This is a standard consequence of the infinite-occupancy theory; we include the details for completeness.

Let $a_0>0$ be chosen so small that $2^\gamma \Gamma(1-\gamma) a_0 < e^{-2}$. Set $n_k:=\lceil\phi(a_0 k)\rceil$ and $t_k:=2n_k$.

Let $\{\mathcal N(t):t\ge0\}$ be an independent unit-rate Poisson process and define the Poissonized range
\[
R(t):=\#\{\xi_1,\ldots,\xi_{\mathcal N(t)}\}.
\]
Then $R(t)$ is a sum of independent Bernoulli variables with mean
\[
\mu(t):=\sum_i(1-e^{-t\pi_i})\sim \Gamma(1-\gamma)\alpha(t).
\]
Since $\alpha(n_k)\sim a_0k$, regular variation gives
\[
\alpha(t_k)\sim2^\gamma a_0k,
\qquad
\mu(t_k)\sim2^\gamma\Gamma(1-\gamma)a_0k.
\]
Hence $\mu(t_k)\le e^{-2}k$ for all sufficiently large $k$ by the choice of $a_0$.

Coupling the fixed-size and Poissonized experiments gives
\[
\Pnum(N_k<n_k)
\le \Pnum(R(t_k)\ge k)+\Pnum(\mathrm{Poi}(t_k)<n_k).
\]
By Chernoff's inequality,
\[
\Pnum(R(t_k)\ge k)
\le \left(\frac{e\mu(t_k)}{k}\right)^k
\le e^{-k},
\]
and
\[
\Pnum(\mathrm{Poi}(2n_k)<n_k)\le e^{-n_k/4}.
\]
Since $\phi\in\RV_{1/\gamma}$ with $1/\gamma>1$, $n_k/k\to\infty$, so both terms are exponentially small. Thus \eqref{eq:tail_bound} follows for a suitable $c_0>0$.

\section{Robustness to self-loop deletion}\label{app:loop_deletion}

For $x\in V_n$, define the loop-deleted directed and undirected degrees by
\begin{align}
D_n^{+,\circ}(x)&:=\#\bigl(V_n^+(x)\setminus\{x\}\bigr),\\
\bar D_n^\circ(x)&:=\#\bigl(\bar V_n(x)\setminus\{x\}\bigr).
\end{align}
Let $\vec R_{n,k}^{\,\circ}$ and $\bar R_{n,k}^{\,\circ}$ be the corresponding degree counts, normalized by $R_n$ to obtain $\vec r_{n,k}^{\,\circ}$ and $\bar r_{n,k}^{\,\circ}$.

\begin{proposition}[Deletion of self-loops]\label{prop:loop_deletion}
Under the assumptions of Theorem~\ref{thm:main}, for every fixed $k\ge1$,
\begin{equation}
\vec r_{n,k}^{\,\circ}\longrightarrow\vec r_k,
\qquad
\bar r_{n,k}^{\,\circ}\longrightarrow\bar r_k
\quad\text{a.s.}
\end{equation}
The loop-deleted finite-sample tails converge to $\vec r_{k+}$ and $\bar r_{k+}$ as well. Hence the limiting degree distributions are unchanged by the deletion of self-loops.
\end{proposition}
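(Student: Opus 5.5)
Since $D^{+,\circ}_n(x)$ and $D^+_n(x)$ differ by at most one, and only for vertices carrying a self-loop, the plan is a comparison argument. Write $L_n$ for the set of vertices $x$ with $W_n(x,x)\ge1$. For every $x\notin L_n$ the two degrees coincide, in both the directed and the undirected graph. Hence for each fixed $k$,
\[
\bigl|\vec R^{\,\circ}_{n,k}-\vec R_{n,k}\bigr|\le \#L_n,\qquad \bigl|\bar R^{\,\circ}_{n,k}-\bar R_{n,k}\bigr|\le \#L_n .
\]
The same bound holds for the tail counts $\#\{x:D\ge k\}$. By \eqref{eq:CXY_dir}--\eqref{eq:CXY_undir}, both claims therefore reduce to showing $\#L_n/R_n\to0$ almost surely.

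For the denominator I use the occupancy strong law $R_n\sim\Gamma(1-\gamma)\alpha(n)$ a.s. (Karlin's theorem, or Gnedin--Hansen--Pitman). For the numerator, a vertex $i$ lies in $L_n$ only if some $j\le n-1$ has $\xi_j=\xi_{j+1}=i$. Fix a threshold $t=t_n$ and split symbols into those with $\phi(i)\le t$, at most $\alpha(t)$ of them, and the rest. For the rest, the union bound gives
\[
\Enum\#\{i\in L_n:\phi(i)>t\}\le n\sum_{i:\pi_i<1/t}\pi_i^2 .
\]
Karamata's theorem evaluates the right side as $\asymp n\,t^{-2}\alpha(t)$, since $\sum_{\pi_i<1/t}\pi_i^2\sim\frac{\gamma}{2-\gamma}t^{-2}\alpha(t)$. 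Taking $t=\sqrt n$ gives
\[
\Enum\#L_n\le \alpha(\sqrt n)+C\alpha(\sqrt n)=O(\alpha(\sqrt n)),
\]
which is $n^{\gamma/2+o(1)}=o(\alpha(n))$.

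To upgrade to almost sure convergence I work along a geometric subsequence $n_m=2^m$. Markov's inequality gives
\[
\Pnum\bigl(\#L_{n_m}>\varepsilon\alpha(n_{m})\bigr)\le C\varepsilon^{-1}n_m^{-\gamma/2+o(1)},
\]
which is summable, so Borel--Cantelli applies. Monotonicity of $\#L_n$ in $n$ and $\alpha(n_{m+1})\le C\alpha(n_m)$ (regular variation) then interpolate between the subsequence points. This gives $\#L_n=o(\alpha(n))=o(R_n)$ a.s.

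Combining the steps, $|\vec r^{\,\circ}_{n,k}-\vec r_{n,k}|\le \#L_n/R_n\to0$ a.s., and the same holds for the undirected masses and for all finite-sample tails. The limits are therefore those in \eqref{eq:CXY_dir}--\eqref{eq:CXY_undir}, as claimed. The main obstacle is the almost sure control of $\#L_n$. The expectation bound is straightforward, but the interpolation step must be done carefully: I expect it to follow from monotonicity as above, with a fallback of computing the variance of $\#L_n$ via the $1$-dependence of the loop indicators if needed. Everything else is bookkeeping.
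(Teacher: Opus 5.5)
Your proof is correct. The reduction is the same as in the paper. The retained-loop and loop-deleted versions of the sets $\{x:D=k\}$ and $\{x:D\ge k\}$ differ only inside the loop-carrying set. So everything comes down to showing $\#L_n/R_n\to0$ a.s., together with $R_n\sim\Gamma(1-\gamma)\alpha(n)$.

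The difference lies in how you control $\#L_n$.

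\textbf{The paper's route.} It splits the adjacent pairs into the two families $(\xi_1,\xi_2),(\xi_3,\xi_4),\ldots$ and $(\xi_2,\xi_3),(\xi_4,\xi_5),\ldots$, each consisting of independent pairs. It encodes a pair as $i$ if it is a loop at $i$ and as $0$ otherwise. This gives two genuine occupancy schemes with frequencies $\pi_i^2$ and counting function $\alpha(\sqrt t)+O(1)\in\RV_{\gamma/2}$. The Karlin/Gnedin--Hansen--Pitman strong law then gives $\Lambda_n=O(\alpha(\sqrt n))$ a.s. in one step.

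\textbf{Your route.} You avoid both the decomposition and the appeal to a second strong law. The ingredients are:
\begin{itemize}
\item the union bound $\Pnum(i\in L_n)\le n\pi_i^2$;
\item the Karamata estimate $\sum_{\pi_i<1/t}\pi_i^2\sim\frac{\gamma}{2-\gamma}t^{-2}\alpha(t)$ at $t=\sqrt n$, which is correct;
\item Markov's inequality along $n_m=2^m$, followed by Borel--Cantelli.
\end{itemize}
The $1$-dependence of the loop indicators never enters, because only expectations are used.

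The interpolation step you flagged is fine as it stands. For $n_m\le n\le n_{m+1}$, monotonicity of $\#L_n$ and of $\alpha$ gives $\#L_n/\alpha(n)\le\bigl(\#L_{n_{m+1}}/\alpha(n_{m+1})\bigr)\cdot\alpha(n_{m+1})/\alpha(n_m)$. The last factor tends to $2^\gamma$, so the variance fallback is unnecessary.

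\textbf{What each buys.} Your version is more elementary and self-contained. What it gives up is the sharp almost-sure order: you get $\#L_n=o(\alpha(n))$ rather than $O(\alpha(\sqrt n))$. However, taking the threshold $m^2\alpha(\sqrt{n_m})$ in Markov's inequality makes the probabilities summable like $m^{-2}$. That recovers $\#L_n=O\bigl((\log n)^2\alpha(\sqrt n)\bigr)$ a.s., which still yields the rate $n^{-\gamma/2+o(1)}$ for the affected proportion quoted in the Discussion.
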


\begin{proof}
Let $\Lambda_n:=\#\{x:W_n(x,x)\ge1\}$ be the number of vertices carrying a self-loop. Split the adjacent pairs into two families: $(\xi_1,\xi_2),(\xi_3,\xi_4),\ldots$ and $(\xi_2,\xi_3),(\xi_4,\xi_5),\ldots$. Within either family the pairs are independent. Encode a pair by the label $i$ if both entries equal $i$, and by $0$ otherwise. The resulting i.i.d.\ variables have frequencies $p_i=\pi_i^2$ and $p_0=1-\sum_i\pi_i^2$. The counting function of this frequency sequence is
\[
\alpha_\circ(t)=\alpha(\sqrt t)+O(1)\in\RV_{\gamma/2}.
\]
The standard strong law for infinite occupancy \cite{GHP2007,Karlin1968}, applied separately to the two families, shows that each family contains $O(\alpha(\sqrt n))$ distinct nonzero labels a.s. Hence $\Lambda_n=O(\alpha(\sqrt n))$ a.s. The same occupancy strong law gives $R_n\sim\Gamma(1-\gamma)\alpha(n)$ a.s. Regular variation then yields
\[
\frac{\Lambda_n}{R_n}
=O\!\left(\frac{\alpha(\sqrt n)}{\alpha(n)}\right)
=O\!\left(n^{-\gamma/2+o(1)}\right)\longrightarrow0.
\]
Deleting self-loops changes the degree only at these $\Lambda_n$ vertices, and there by exactly one. Consequently, each degree count differs from its retained-loop counterpart by at most $\Lambda_n$. After division by $R_n$, the difference tends to zero a.s. Combining this with the retained-loop fixed-$k$ strong laws \eqref{eq:CXY_dir} and \eqref{eq:CXY_undir} proves the proposition.
\end{proof}

\end{document}